\newtheorem{thm}{Theorem}[section]
\newtheorem{defi}[thm]{Definition}
\newtheorem{lma}[thm]{Lemma}
\theoremstyle{definition}
\newtheorem{rem}[thm]{Remark}
\DeclareMathOperator{\diam}{diam}
\DeclareMathOperator{\size}{size}
\DeclareMathOperator{\dist}{dist}
\DeclareMathOperator{\conv}{conv}
\DeclareMathOperator{\Lin}{Lin}
\newcommand{\id}{\mathrm{id}}
\newcommand{\e}{\eps}
\newcommand{\wto}{\rightharpoonup}
\renewcommand{\H}{{\mathscr{H}}}
\renewcommand{\d}{\mathrm{d}}
\newcommand{\Gr}{\mathrm{Gr}\,}
\newcommand{\CR}{\mathrm{CR}}
\newcommand{\nn}{\mathbf{n}}
\newcommand{\DD}{\nabla}
\newcommand{\snu}{\mathscr{S}_N(U)}
\newcommand{\ubar}[1]{\underaccent{\bar}{#1}}
\def\R{{\mathbb R}} 
\def\E{{\mathscr E}}
\def\T{{\mathscr T}}
\def\V{{\mathscr V}}
\newcommand{\eps}{\varepsilon}
\def\XXint#1#2#3{{\setbox0=\hbox{$#1{#2#3}{\int}$ }
\vcenter{\hbox{$#2#3$ }}\kern-.6\wd0}}
\begin{document}

\title{Consistent and convergent discretizations of Helfrich-type energies on general meshes}
\date{\today} 
\author[V.~Degrooff]{Vincent Degrooff}
\author[P.~Gladbach]{Peter Gladbach}
\author[H.~Olbermann]{Heiner Olbermann} 
\address[Peter Gladbach]{Institut f\"ur Angewandte Mathematik, Universit\"at Bonn, 53115 Bonn, Germany}
\address[Heiner Olbermann]{Institut de Recherche en Math\'ematique et Physique, UCLouvain, 1348 Louvain-la-Neuve, Belgium}
\address[Vincent Degrooff]{Institute of Mechanics, Materials and Civil Engineering,  UCLouvain, 1348 Louvain-la-Neuve, Belgium}

\email[Peter Gladbach]{gladbach@iam.uni-bonn.de}
\email[Heiner Olbermann]{heiner.olbermann@uclouvain.be}
\email[Vincent Degrooff]{vincent.degrooff@uclouvain.be}

\begin{abstract}
We analyze discrete  approximations of the second fundamental form on graphs of functions that are piecewise affine on irregular meshes. Being related with the Morley finite element, the approximation in this precise form has first been suggested by E.~Grinspun, Y.~Gingold, J.~Reisman, and D.~Zorin, \emph{Computer Graphics Forum} 2006, Volume 25, 547-556. 
  We show how to use this framework to  approximate  continuous variational problems of the form $E_0(M) := \int_M f(x,n_M(x),D n_M(x))\,d\H^2(x)$, where $n_M$ denotes the normal of the surface $M$. Here the integrand $f$ is not necessarily quadratic. This corresponds to nonlinear Euler-Lagrange equations. Our approximation is rigorously formulated in the framework of $\Gamma$-convergence: We combine an ansatz-free asymptotic lower bound for any uniform approximation  and a recovery sequence consisting of any regular triangulation of the limit sequence and an almost optimal choice of edge director. We give numerical examples showing the efficiency and accuracy of the algorithm in nonlinear problems.
\end{abstract}


\maketitle

\section{Introduction}
Discrete notions of curvature for embedded surfaces are important to various applications: simulation of elastic plates and shells, surface regularization, and partial differential equations on surfaces.

In the smooth setting, curvature energies of surfaces are well-studied. These energies are typically of integral form
\begin{equation}\label{eq: integral energy}
  E_0(M) := \int_M f(x,n_M(x),D n_M(x))\,d\H^2(x)\,.
\end{equation}

Here $M\subset \R^3$ is an immersed smooth surface, $n_M:M\to S^2$ its oriented unit normal field, and $D n_M(x)\in \mathrm{Lin}(T_xM;T_xM)$ the \emph{shape operator} of $M$ at $x$, which encodes the extrinsic curvature of $M$.

The energies of the type \eqref{eq: integral energy} are called \emph{Helfrich-type energies}, with particular attention given to the \emph{Willmore energy}
\begin{equation}\label{eq: Willmore energy}
W(M) := \int_M |D n_M(x)|^2\,d\H^2,
\end{equation}
which has links to plate theory, mean curvature flow, and conformal geometry.

A number of discrete versions of energies \eqref{eq: integral energy} and \eqref{eq: Willmore energy} have been proposed, see e.g.~\cite{bobenko2008discrete,meyer2003discrete,grinspun2006discrete,crane2017glimpse,schumacher2019variational,schumacher2020variational}.
Typically, the objects studied are not smooth surfaces but \emph{triangular complexes}, which are finite collections of triangles $\T := \{\kappa_1,\ldots,\kappa_N\}$ in $\R^3$ that are glued together pairwise along edges. These objects appear in a variety of contexts, including computer graphics, finite elements, and algebraic topology.

A widely used discrete version of the Willmore energy \eqref{eq: Willmore energy} is the finite-difference model introduced in \cite{grinspun2003discrete}
\begin{equation}\label{eq: differences}
E_{FD}(\T)=\frac12 \sum_{\kappa,\kappa'\,n.n.} \frac{l_{\kappa,\kappa'}}{d_{\kappa,\kappa'}}|\bar n(\kappa) - \bar n(\kappa')|^2,
\end{equation}
for which the second and third author recently \cite{gladbach2021approximation} showed $\Gamma$-convergence to the Willmore energy \eqref{eq: Willmore energy}. Here, $\bar n(\kappa)\in S^2$ denotes the oriented unit normal of a triangle $\kappa\in \T$, $l_{\kappa,\kappa'}>0$ the edge length between nearest neighbors $\kappa,\kappa'\in \T$, and $d_{\kappa,\kappa'}>0$ the distance between the circumcenters of $\kappa,\kappa'$.

However, as already seen in \cite{grinspun2003discrete} and \cite{schmidt2012universal}, the construction of an approximating sequence $\T_h\to M$ with $E_{FD}(\T_h)\to W(M)$ is extremely sensitive to the chosen mesh. Low-energy configurations must be defined on meshes with  nearly right angles.

In this article, we instead study a different discrete energy introduced in \cite{grinspun2006computing},
\begin{equation}
\label{eq:2}
  E(\T,n) := \sum_{\kappa\in \T} \int_{\kappa} f(x, \bar n(\kappa), D n_\kappa)\,d\H^2(x).
\end{equation}

Here $\bar n(\kappa)\in S^2$ is the oriented unit normal of $\kappa$ and $n:\E(\T)\to \R^3$ is a so-called \emph{edge director}, which can be chosen subject to some constraints on every edge $e\in\E(\T)$ and is then linearly interpolated on every triangle $\kappa\in \T$ between the three edge midpoints, yielding the piecewise constant gradient $D n_\kappa\in \Lin(T\kappa;\R^3)$. We note here that the piecewise affine interpolation is not continuous along the edges, only on the edge midpoints, and thus belongs to a geometric version of the Crouzeix-Raviart nonconforming finite element space \cite{crouzeix1973conforming}, and is similar to the Morley finite element \cite{morley1968triangular,ciarlet2002finite}.

\medskip

We will show under the assumption that all surfaces are graphs, that the cluster points of the minimizers of the discrete variational problem \eqref{eq:2} are minimizers of the continuous problem \eqref{eq: integral energy}. Furthermore we show that every (graphical) surface of regularity $W^{2,p}$ can be approximated by discrete ones simultaneously in $W^{1,p}$ and in energy. In the parlance of  \cite{grinspun2006computing}, these results amount to the statement that \eqref{eq:2} is a  ``convergent'' and ``consistent'' discretization of \eqref{eq: integral energy}. The convergence and consistency for problems with linear Euler-Lagrange equation (which corresponds to the case of a quadratic function $\xi \mapsto f(x,n,\xi)$) have already been addressed in \cite{grinspun2006computing,zorin2005curvature}. Our  treatment allows to establish a more general statement encompassing also nonlinear problems.

\medskip

To frame our result in yet another manner, Theorem \ref{thm: main} below is the statement that under the assumption that all surfaces are graphs over the same domain, the discrete energy $E(\T,n)$ $\Gamma$-converges to its continuous counterpart $E_0(M)$. The above statement about cluster points follows from the latter by   general facts from  the theory of $\Gamma$-convergence \cite{MR1201152,MR1968440}.

\medskip

The proof of the compactness and lower bound part of Theorem \ref{thm: main} relies on recasting the discrete surface normal as an element in the so-called Crouzeix-Raviart space \cite{crouzeix1973conforming}, for which suitable compactness results exist \cite{buffa2008broken,2011-IMAJNA-lavrentiev_nc}. In order to identify the limit of the edge directors with the limit of the discrete surface normal, one needs appropriate estimates, which we obtain in Lemma \ref{lma: normal estimate}.  The upper bound part of Theorem \ref{thm: main} follows from a judicious construction of the edge director (the idea of which can already be found in \cite{grinspun2006computing}) and some calculus.

\medskip

The plan of the paper is as follows: After introducing  some notation, we state the main result  in  Section \ref{sec: discrete}. The proof is given  in Section \ref{sec: compactness}. In Section \ref{sec:numerical} we give some numerical examples.  

\section{Setup and statement of  result}\label{sec: discrete}

\subsection{General notation}
The symbol ``$C$'' will be used as follows: A statement such as ``$a\leq Cb$'' has to be understood as ``there exists a constant $C>0$ such that $a\leq Cb$''. We also write $a\lesssim b$ in that situation. The $l$-dimensional Hausdorff measure will be denoted by $\H^l$. For a sequence $f_h$ in a Banach space $W$ converging weakly to $f\in W$, we write $f_h\wto f$ in $W$. 

\medskip

For points $p_1,p_2\in\R^3$, we denote the line segment connecting them by
\[
  [p_1,p_2]:=\{tp_1+(1-t)p_2:t\in[0,1]\}\,.
  \]

\subsection{Triangular complexes}

We define triangular complexes as follows:
\begin{defi}
  \begin{itemize}
  \item A \emph{triangle} is a set $\kappa = \conv(x,y,z)\subset \R^3$
    with $x,y,z\in \R^3$ not colinear. The \emph{vertices} of $\kappa$ are the points $x,y,z$, and the edges of $\kappa$ are the line segments $[x,y],[y,z], [x,z]\subset \R^3$.
\item The \emph{unit normal} of the
    triangle $\conv(x,y,z)$ is one of the vectors
  \[
  \bar n(\kappa) := \pm \frac{(y-x)\times(z-x)}{\left|(y-x)\times (z-x)\right|}.
  \]
\item  A \emph{triangular complex} is a finite family of triangles $\T := \{\kappa_1,\ldots,\kappa_N\}$ with the property that the intersection of two different triangles $\kappa, \kappa'\in \T$ is either empty, a single common vertex, or an entire common edge. 
\item The set of \emph{vertices}  of the triangular complex $\T$  is denoted by $\V(\T)$, and contains all three vertices of all triangles $\kappa\in \T$. The set of \emph{edges}  of the triangular complex $\T$  is denoted by $\E(\T)$, and contains all three edges of all triangles $\kappa\in \T$.
\item The size of $\T$ is given by the maximal diameter of any $\kappa\in \T$, 
\[
\size(\T)=\max_{\kappa\in\T}\diam(\kappa)\,.
\]
\end{itemize}
\end{defi}

\begin{defi}\label{def:regular}
Let $C_*>1$. We will say that $\T$ is regular if for every $\kappa\in \T$, 
  \begin{equation}\label{eq:9}
  \H^2(\kappa)\geq C_* (\diam(\kappa))^2.
\end{equation}
\end{defi}

From now on it will always be understood that the triangular complexes we are considering are regular. Also, the generic constant ``$C$'' (which is implicit in statements such as $a\lesssim b$) may depend on $C_*$ in the sequel. Like $C$, the constant $C_*$ may change its value in the course of the proof. However  when considering sequences of regular triangular complexes $(\T_h)_{h>0}$, it will be independent of the sequence parameter $h$.  

\bigskip

For a triangular complex $\T$, let $m(\T)$ denote the set of midpoints of edges in $\T$. 
  Given a regular triangular complex $\T$, we define the notion of edge director field $n:\E(\T)\to \R^3$ below. In this definition, we write $\tau(e)=\frac{y-x}{|y-x|}$ for an edge $e=[x,y]\in \E(\T)$. (The sign ambiguity in the definition of $\tau$ does not have any consequence in its usage.)

\begin{defi}
  A \emph{unit edge director} is a map $n:\E(\T)\to S^2$ such that $n(e)\cdot \tau(e) = 0$ and $n(e) \cdot \bar n(\kappa) \geq 0$ for all $e = \kappa\cap \kappa' \in  \E(\T)$. The family of unit edge directors is denoted $N(\T)$.
  
\end{defi}

To each edge director field we associate a piecewise affine  interpolation:
\begin{defi}
Let $n\in N(\T)$. Define the piecewise-affine (but discontinuous) interpolation, also denoted $n:\bigcup_{\kappa\in \T}\kappa \to \R^3$, by defining its restriction to  $\kappa\in \T$ as the unique affine map $n|_{\kappa}:\kappa\to \R^3$ that coincides with $n$ on the edge midpoints $\frac{x+y}{2}$. Its \emph{piecewise gradient} $D n_\kappa\in \Lin(T\kappa;\R^3)$ is extended to a $3\times 3$ matrix by precomposition with the orthogonal projection to $T\kappa$, so that we may write $D n_\kappa\in \R^{3\times 3}$. 
\end{defi}

\subsection{Definition of the discrete energy}

We now present a discretization of the Helfrich-type energy \eqref{eq: integral energy} where the integrand
\[
  f:\R^3\times S^2 \times \R^{3\times 3}\to\R
\]
satisfies the assumptions
\begin{itemize}
  \item [(A1)] $f:\R^3\times S^2 \times \R^{3\times 3}\to \R$ is continuous in all its variables,  and convex in the last.
  \item [(A2)] $f(x,n,A)\gtrsim |A|^p$ for some $p\in(1,\infty)$.
\end{itemize}

 We will consider discrete energies $E:\{(\T,N(\T))\} \to \R$, namely
\[
E (\T,n) := \sum_{\kappa \in \T} \int_\kappa f(x, \bar n(\kappa), D n_\kappa)\,d\H^2(x).
\]

\subsection{Piecewise affine graphs over triangulations}
\label{sec:notation}

We  introduce the notation that will help us exploit the assumption that all surfaces are graphs. 

\medskip

Assume that $U\subset\R^2$ is an open polygon.
Let $\T$ be a regular triangulation of $U$, by which we mean a triangular complex consisting of triangles that are immersed in $\R^2$ with 
\[
\bigcup_{\kappa\in\T}\kappa=\overline U\,.
\]
(For the rest of the present section  the symbol $\T$ will  denote  triangular complexes of this kind, with the exception of Remark \ref{rem:gamma}.) Now let $u$ be a  continuous function $U\to \R$ whose restriction to each $\kappa\in \T$ is affine. Then we define the \emph{push-forward} of the triangulation $\T$ under $u$ as the triangular complex consisting of triangles that are immersed in $\R^3$, 
\[
u_*(\T):=\left\{ \{(x,u(x)\}:x\in \kappa,\kappa \in\T\right\}\,.
\]

\medskip

We consider 3-tuples 
\[
(\T,u,n)
\]
where $\T$ is a regular triangulation of $U$, $u$ is a piecewise affine continuous function $U\to \R$ which is affine on each $\kappa\in \T$, and $n\in N(u_*(\T))$. Let the set of such 3-tuples be denoted by
\[
  \snu\,.
\]

\medskip

Let $P_1(\T)$ denote the set of piecewise affine functions relative to $\T$,
\[
  P_1(\T)=\left\{v\in L^1\left(\bigcup_{\kappa\in\T}\kappa\right):v|_\kappa \text{ is affine }\forall \kappa\in \T\right\}\,.
\]
The (first order) Crouzeix-Raviart finite element space is defined as
\[
  \CR(\T)=\left\{v\in P_1(\T):v\text{ is continuous in }m(\T)\right\}\,.
  \]
Any edge  director field $n\in N(u_*(\T))$ can  be associated to an element of $\CR(\T;\R^3)$ as follows. For an edge midpoint $p\in m(\T)$, set $\ubar{n}(p)=n(p,u(p))$. Then define $\ubar{n}$ by affine interpolation on every $\kappa\in \T$. This defines $\ubar n$ as an element of $\CR(\T;\R^3)$. In order to alleviate the notation, we will not distinguish $\ubar n$ and $n$ in the sequel. No confusion will arise from this.
The piecewise gradient of this map will be denoted by 
\[
\DD n:U\to \R^{3\times 2}\,.
\]
This implies that the discrete shape operator $Dn|_{\kappa}$ and the piecewise gradient $\nabla n$ are related by 
\[
Dn|_{\kappa}(\id_{2\times 2}+e_3\otimes\nabla u)= \nabla n\,,
\]
where $\id_{2\times 2}$ denotes the two-by-two identity matrix. 

\medskip

In order to express the surface normal to the graph of $u$ in terms of that function, we define $\nn:\R^2\to S^2$,
\[
\nn(p):=\frac{(-p^\bot,1)}{\sqrt{|p|^2+1}}\,,
\]
where $p^\bot=(-p_2,p_1)$ for $p=(p_1,p_2)$.
Now we set
  \begin{equation}\label{eq:4}
\begin{split}
F:U\times \R\times \R^2\times \R^{3\times 2}&\to \R\\
(x,z,p,\xi)&\mapsto f((x,z),\nn(p),\xi(\id_{2\times 2} + e_3\otimes p)^{-1})\sqrt{1+|p|^2}\,,
\end{split}
\end{equation}
which allows us to write 
  \begin{equation}\label{eq:5}
E(u_*(\T), n)=\int_U F(x,u(x),\nabla u(x),\DD n(x))\d x\,.
\end{equation}

Analogously, if $M =\Gr u$ for some $u\in W^{2,p}\cap W^{1,\infty}(U)$, then
\[
E_0(M) = \int_U F(x,u(x), \nabla u(x), \nabla (\nn(\nabla u))(x))\d x.  
\]

\subsection{Statement of the result}

As above, we assume that $U\subset \R^2$ is a polygonal domain.
\begin{thm}
  \label{thm: main}
  \begin{itemize}
\item[(i)]
Let $(\T_h,u_h,n_h)$ be a sequence in $\snu$  with $\size(\T_h)\leq h$, and the uniform bound $\|u_h\|_{W^{1,\infty}}\leq L$. Furthermore assume that
\[
\sup_{h>0} \int_U F(x,u_h,\nabla u_h, \DD n_h)\d x\leq  C\,.
\]

Then there exists a subsequence (no relabeling) and some $u\in W^{2,p}(U)$ with surface normal $\nn(\nabla u)$ such that $u_h\to u$ uniformly and 
\[
\begin{split}
  n_h&\to \nn(\nabla u) \quad \text{ in }L^q(U)  \text{ for all } q\in [1,\infty)\\
  n_h & \to \nn(\nabla u) \quad \text{ in  }L^\infty(U) \text{ if }p>2\\
\nabla n_h&\wto \nabla \left(\nn(\nabla u)\right) \quad \text{ in }L^p(U)
 \,.
\end{split}
\]

\item[(ii)] 
  Let $(\T_h,u_h,n_h)$ and $u$ be as in (i). Then
  \begin{equation}\label{eq: lower semicontinuity}
    \liminf_{h\to 0}\int_U F(x,u_h,\nabla u_h,\DD n_h)\d x\geq \int_U F(x,u,\nabla u,\nabla (\nn(\nabla u)))\d x.
  \end{equation}
  \item[(iii)] 
  Let $u\in W^{1,\infty}\cap W^{2,p}( U)$,  and $\T_h$ be a sequence of regular triangulations of $U$ with $\size\T_h=h$. Then there exist $u_h$ and $n_h$ such that $(\T_h,u_h,n_h)\in \snu$  and
  \[
    \lim_{h\to 0} \int_U F(x,u_h,\nabla u_h,\DD n_h)\d x=\int_U F(x,u,\nabla u,\nabla (\nn(\nabla u)))\d x\,.
    \]
  If $u\in C^2(\bar U)$ we may choose $u_h$ the piecewise affine interpolation of $u$ itself.
\end{itemize}
\end{thm}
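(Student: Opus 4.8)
The plan is to establish the statement first for $u\in C^2(\bar U)$, where I take $u_h:=I_hu$ to be the nodal piecewise affine interpolant of $u$ on $\T_h$, and then to pass to general $u\in W^{1,\infty}\cap W^{2,p}(U)$ by density. For $C^2$ data the standard interpolation estimates on regular triangulations (using the non-degeneracy \eqref{eq:9}) give $\|u_h-u\|_{L^\infty}\lesssim h^2\|u\|_{C^2}$ and $\|\nabla u_h-\nabla u\|_{L^\infty}\lesssim h\|u\|_{C^2}$, so $u_h\to u$ and $\nabla u_h\to\nabla u$ uniformly, and in particular $\sup_h\|u_h\|_{W^{1,\infty}}<\infty$. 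Since $F$ from \eqref{eq:4} is continuous, the whole problem then reduces to producing an edge director $n_h\in N((u_h)_*(\T_h))$ whose piecewise gradient satisfies $\nabla n_h\to\nabla(\nn(\nabla u))$: once all four arguments of $F$ converge while staying in a fixed compact set, convergence of the energies is immediate. Throughout write $\nu:=\nn(\nabla u)\in C^1(\bar U;S^2)$.

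For the edge director, let $p\in m(\T_h)$ be a planar edge midpoint, let $e$ be the corresponding edge of $(u_h)_*(\T_h)$ with unit tangent $\tau(e)$, and set
\[
\ubar n_h(p):=\frac{\nu(p)-\bigl(\nu(p)\cdot\tau(e)\bigr)\tau(e)}{\bigl|\nu(p)-(\nu(p)\cdot\tau(e))\,\tau(e)\bigr|},
\]
the normalized projection of the \emph{exact} surface normal, sampled at the planar midpoint $p$, onto the plane orthogonal to $\tau(e)$. By construction $\ubar n_h(p)\in S^2$ and $\ubar n_h(p)\cdot\tau(e)=0$. Because the triangle normals of $(u_h)_*(\T_h)$ converge uniformly to $\nu$ and $\ubar n_h(p)\to\nu(p)$, the sign condition $\ubar n_h(p)\cdot\bar n(\kappa)\ge0$ holds for all $h$ small (orienting all normals upward, which is possible since $U$ is a graph domain); hence $n_h\in N((u_h)_*(\T_h))$. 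Letting $n_h$ be the Crouzeix--Raviart interpolant of these midpoint values and using linearity, $n_h=I_h^{\CR}\nu+I_h^{\CR}c$ with $c(p):=\ubar n_h(p)-\nu(p)$.

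The key estimate is that sampling at the midpoint makes the orthogonality defect of \emph{second} order. Writing $e=[(x_1,u(x_1)),(x_2,u(x_2))]$ with $p=\tfrac12(x_1+x_2)$, the quadratic Taylor terms about $p$ cancel in the midpoint difference, so $u(x_2)-u(x_1)-\nabla u(p)\cdot(x_2-x_1)=o(h^2)$ uniformly (using uniform continuity of $D^2u$ on $\bar U$). Since $\nu(p)$ is exactly orthogonal to the true tangent $(x_2-x_1,\nabla u(p)\cdot(x_2-x_1))$ of the graph, this gives $\nu(p)\cdot\tau(e)=o(h^2)/|{\rm chord}|=o(h)$, whence $|c(p)|\lesssim|\nu(p)\cdot\tau(e)|=o(h)$ uniformly. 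Consequently the correction is invisible in the gradient: on each $\kappa$ one has $\|\nabla I_h^{\CR}c\|_{L^\infty(\kappa)}\lesssim h^{-1}\max|c|=o(1)$ by regularity, while $\nabla I_h^{\CR}\nu\to\nabla\nu$ uniformly (CR interpolation of a $C^1$ field). Thus $\nabla n_h\to\nabla(\nn(\nabla u))$ uniformly; together with $u_h\to u$ and $\nabla u_h\to\nabla u$ uniformly, all arguments of $F$ converge within a compact set, and continuity of $F$ on the bounded domain $U$ yields $\int_U F(x,u_h,\nabla u_h,\nabla n_h)\,dx\to\int_U F(x,u,\nabla u,\nabla(\nn(\nabla u)))\,dx$, settling the $C^2$ case.

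For general $u\in W^{1,\infty}\cap W^{2,p}(U)$ I choose $u^{(k)}\in C^2(\bar U)$ with $u^{(k)}\to u$ in $W^{2,p}$ and $\sup_k\|u^{(k)}\|_{W^{1,\infty}}<\infty$; since $\nabla(\nn(\nabla u^{(k)}))=D\nn(\nabla u^{(k)})\,\nabla^2u^{(k)}\to\nabla(\nn(\nabla u))$ in $L^p$, the limit energy is continuous along this sequence, and a diagonal extraction combined with the lower bound of Theorem~\ref{thm: main}(ii) upgrades the $C^2$ recovery sequences to one for $u$ (if the limit energy is infinite the claim follows directly from (ii) applied to the interpolant). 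I expect the single genuinely delicate point to be the key estimate of the third paragraph: the orthogonality constraint forces a correction to the exact normal that is only $O(h)$ if $\nu$ is sampled carelessly, and after differentiation in the Crouzeix--Raviart space — where differences of midpoint values are divided by lengths $\sim h$ — such a correction would leave an $O(1)$, hence non-vanishing, error in $\nabla n_h$. Sampling $\nu$ precisely at the edge midpoint reduces the defect to $o(h^2)$ via the second-order accuracy of the midpoint rule, and this is exactly what renders the correction harmless in the limiting gradient.
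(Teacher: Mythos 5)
Your proposal, as written, proves only part (iii) of the theorem; parts (i) and (ii) --- the compactness and lower-bound statements --- are never addressed, and they are the substantive half of the result. For (i) one must extract, from a bounded-energy sequence of Crouzeix--Raviart edge directors, a limit $\tilde n\in W^{1,p}(U;\R^3)$ with $n_h\to\tilde n$ strongly and $\nabla n_h\wto\nabla\tilde n$ weakly (the paper invokes a compactness theorem for nonconforming finite element functions, Theorem 4.3 of the cited Ortner--Praetorius paper), and one must then \emph{identify} $\tilde n$ with the normal $\nn(\nabla u)$ of the uniform limit $u$. This identification is not automatic, since the edge director is an independent degree of freedom a priori unrelated to the geometry of the mesh: the paper's Lemma \ref{lma: normal estimate}, which exploits the orthogonality constraint $n(e)\cdot\tau(e)=0$ on two distinct edges of the same triangle together with the sign condition $n(e)\cdot\bar n(\kappa)\geq 0$, gives the quantitative comparison $|n(e)-\bar n(\kappa)|\lesssim(\diam\kappa)|Dn_\kappa|$, which forces $\bar n_h\to\tilde n$ in $L^p$, hence $\tilde n=\nn(\nabla u)$ and $u\in W^{2,p}(U)$. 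Part (ii) then follows from standard weak lower semicontinuity of convex integrands, but it still has to be stated and invoked. Your omission is not cosmetic: your own density step for general $u\in W^{1,\infty}\cap W^{2,p}(U)$ explicitly appeals to ``the lower bound of Theorem \ref{thm: main}(ii)'', so without a proof of (ii) your argument for (iii) is itself incomplete.

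What you do prove is correct and essentially coincides with the paper's Lemma \ref{lma:approx}: your normalized projection of $\nu(p)=\nn(\nabla u(p))$ onto the plane orthogonal to the chord is exactly the paper's ``closest point on the great circle orthogonal to $(x_2-x_1,u(x_2)-u(x_1))$'', and your midpoint cancellation is the same key estimate (your $o(h^2)$ bound via uniform continuity of $D^2u$ is in fact the honest version for $u\in C^2$; the paper's stated $Ch^3$ would need more regularity, though nothing downstream uses the stronger rate). One small repair: since regularity of $\T_h$ only bounds $\size(\T_h)\le h$ and the shape of each triangle, individual triangles may be much smaller than $h$, so the gradient bound for the correction must be carried out at the local scale --- on a triangle of diameter $\ell$, the midpoint defect is $\lesssim\omega(\ell)\ell^2$ with $\omega$ the modulus of continuity of $D^2u$, the correction of the director is $\lesssim\omega(\ell)\ell$, and dividing by $\ell$ (not by $h$) still gives $\lesssim\omega(\ell)\le\omega(h)\to 0$; your division by $h^{-1}$ is not quite right but the conclusion survives.
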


\begin{rem}
\label{rem:gamma}
  \begin{itemize}
  \item[(i)] With the integrand $F$ chosen as in \eqref{eq:4},  the above result shows the discrete-to-continuum convergence
\[
E(\T,n)\stackrel{\Gamma}{\to}E_0
\] 
in the sense of $\Gamma$-convergence,
under the assumption that all surfaces are Lipschitz graphs over the same domain $U$.
\item[(ii)] In light of the compactness result \cite{langer1984compactness} for immersions of a fixed compact closed surface $\Sigma$ for $p>2$, we may replace the Lipschitz graph condition with the topological condition that every $\T$ be a bounded immersion of a fixed compact closed surface, and obtain the analogue of Theorem \ref{thm: main}. The reason, as shown in \cite{langer1984compactness}, is that the Sobolev embedding $W^{1,p}\to C^{0,\alpha}$ ensures that surfaces with finite energy are locally  Lipschitz graphs.
\item[(iii)] We may state and prove an analogous result for discrete approximations via the definition of \emph{pseudo-unit edge directors} instead of  unit edge directors. This is more efficient when it comes to computations. For further comment on this, see Section \ref{sec:numerical}.
\item[(iv)] Note that by the compactness part of the above theorem, the convergence of energies implies the strong $L^p(U)$ convergence of the discrete shape operator $Dn_h$ to $Dn_M$, as long as $f$ is strictly convex.
  \end{itemize}
\end{rem}

\section{Proof of Theorem \ref{thm: main}}

\label{sec: compactness}

The following lemma states that any unit edge director is close to the actual triangle normal field. Recall that $\bar n$ denotes the (piecewise constant) surface normal of the triangular complex $\T$.

\begin{lma}\label{lma: normal estimate}
  Let $\T$ be regular. Then
for every $n\in N(\T)$, $e = \kappa \cap \kappa'\in \E(\T)$, 
\[|n(e) - \bar n(\kappa)| \lesssim (\diam\kappa) |D n_\kappa|.\]
\end{lma}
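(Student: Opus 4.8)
The plan is to prove the estimate triangle-by-triangle, reducing everything to the single triangle $\kappa$ together with its three edges. Write $e_1,e_2,e_3$ for the edges of $\kappa$, let $m_1,m_2,m_3$ be their midpoints, set $\tau_i=\tau(e_i)$ for the unit edge directions and $n_i=n(e_i)$ for the corresponding edge director values. By the definition of $Dn_\kappa$ the affine interpolant $n|_\kappa$ satisfies $n_i-n_j=Dn_\kappa(m_i-m_j)$, and since the midpoints lie in $T\kappa$ the orthogonal projection built into $Dn_\kappa$ acts trivially on $m_i-m_j$. As the medial triangle has side lengths half those of $\kappa$, we have $|m_i-m_j|\le \tfrac12\diam\kappa$, whence the first key estimate
\[
|n_i-n_j|\le |Dn_\kappa|\,|m_i-m_j| \lesssim (\diam\kappa)\,|Dn_\kappa| =: \delta
\]
holds for all $i,j$. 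Thus the three edge director values on a single triangle are mutually $\delta$-close.

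Next I would transfer the orthogonality constraints. Each $n_i$ satisfies $n_i\cdot\tau_i=0$ by the definition of a unit edge director, while for $j\ne i$ the previous step gives
\[
|n_i\cdot\tau_j| = |(n_i-n_j)\cdot\tau_j| \le |n_i-n_j| \lesssim \delta,
\]
using $n_j\cdot\tau_j=0$. Hence every $n_i$ is \emph{nearly} orthogonal to all three edge directions. Splitting $n_i = Pn_i + (n_i\cdot\bar n)\,\bar n$ with $P$ the orthogonal projection onto $T\kappa$, and using $\bar n\cdot\tau_j=0$, the tangential part $Pn_i\in T\kappa$ obeys $|Pn_i\cdot\tau_j| = |n_i\cdot\tau_j|\lesssim\delta$ for every $j$. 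Now choose two edges meeting at a common vertex: the angle between $\tau_1$ and $\tau_2$ is, up to sign, an interior angle of $\kappa$, which the regularity condition \eqref{eq:9} bounds away from $0$ and $\pi$ (this is the usual shape-regularity/minimal-angle consequence of the area being comparable to $(\diam\kappa)^2$, with the bound depending only on $C_*$). Consequently $\{\tau_1,\tau_2\}$ is a basis of the plane $T\kappa$ whose Gram matrix is invertible with a bound depending only on $C_*$, so the two estimates $|Pn_i\cdot\tau_1|,\,|Pn_i\cdot\tau_2|\lesssim\delta$ yield $|Pn_i|\lesssim\delta$.

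It remains to pass from smallness of the tangential part to closeness to $\bar n$. Since $n_i$ and $\bar n$ are unit vectors and $n_i\cdot\bar n\ge 0$ by the orientation constraint, we have $n_i\cdot\bar n = \sqrt{1-|Pn_i|^2}$, whence
\[
|n_i-\bar n|^2 = |Pn_i|^2 + \bigl(1-\sqrt{1-|Pn_i|^2}\bigr)^2 \lesssim |Pn_i|^2 \lesssim \delta^2
\]
whenever $\delta$ lies below a fixed threshold, giving $|n_i-\bar n|\lesssim\delta$ in that regime. When $\delta$ exceeds the threshold the claim is trivial, since $|n_i-\bar n|\le 2$ for two unit vectors, so the inequality holds after enlarging the implicit constant. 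Taking $e=e_i$ proves the lemma. The one genuinely quantitative point, and the step I expect to demand the most care, is the conversion of the regularity condition \eqref{eq:9} into a lower bound on the interior angle and thereby into a uniform bound on the inverse Gram matrix of $\{\tau_1,\tau_2\}$; the remaining steps are elementary linear algebra and a Taylor expansion.
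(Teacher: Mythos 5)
Your proof is correct and follows essentially the same route as the paper's: both rest on the interpolation bound $|n(e)-n(e')|\le |Dn_\kappa|\,|m-m'|\lesssim(\diam\kappa)|Dn_\kappa|$, the exact and approximate orthogonality to the two edge directions combined with the shape-regularity angle bound to control the tangential component of $n(e)$, and the sign condition $n(e)\cdot\bar n(\kappa)\ge 0$ to upgrade tangential smallness to $|n(e)-\bar n(\kappa)|\lesssim(\diam\kappa)|Dn_\kappa|$. The only difference is presentational: the paper normalizes by rescaling and a rigid motion and computes in explicit coordinates, while you argue coordinate-free via the projection onto $T\kappa$ and the Gram matrix of two edge directions.
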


\begin{proof}
  For $e=[x,y]$, we write $\tau(e)=\frac{y-x}{|y-x|}$. (Again, the sign ambiguity in the definition of $\tau$ will not have any consequence in the sequel.) Let $e'$ be another edge bordering $\kappa$.
By rescaling and a rigid motion we can assume $\diam\kappa \simeq 1$, and that $\tau(e)= (0,1,0)$, $\tau(e') = (s,t,0)$ with
  \[
    \begin{split}
    C^{-1}\leq |s|&\leq C\\
    |t|&\leq C\,.
  \end{split}
\]
This also implies $\bar n(\kappa)=(0,0,1)$. 
Since $n(e)\cdot \tau(e) = 0$, we may write $n(e) = (\alpha,0,\beta)$ and $n(e') = (-t/s\gamma,\gamma,\delta)$ for some $\alpha,\beta,\gamma,\delta\in \R$. Thus
\[
  \begin{split}
    |D n_\kappa| &\gtrsim |n(e) - n(e')| \\
    &\gtrsim \min_{r:|r|\lesssim 1}\left(|\alpha-r\gamma|+|\gamma|\right)\\
    &\gtrsim  |\alpha| + |\gamma|.
  \end{split}
\]
By $|\alpha|=|\bar n(\kappa)\times n(e)|$ we obtain in particular
\begin{equation}\label{eq: cross product}
 |\bar n(\kappa)\times n(e)|\lesssim |D n_\kappa|.  
\end{equation}

To conclude,  we use the assumption $n(e) \cdot \bar n(\kappa) \geq 0$, and  obtain 
\[
  |n(e)-\bar n(\kappa)| \lesssim |n(e)\times \bar n(\kappa)| \lesssim |D n_\kappa|.
\]
\end{proof}


\begin{proof}[Proof of Theorem \ref{thm: main} (i)]

The uniform convergence of a subsequence (no relabeling) $u_h\to u$ to an $L$-Lipschitz continuous function $u\in W^{1,\infty}(U)$ is immediate from the Arzel\`a-Ascoli theorem. 


\medskip

By the growth assumptions on $F$ in its last variable, we have that
  \begin{equation}\label{eq:6}
\int_U |\nabla n_h|^p\d x\leq C\,.
\end{equation}
Hence by Theorem 4.3 of \cite{2011-IMAJNA-lavrentiev_nc}\footnote{Note that \cite{2011-IMAJNA-lavrentiev_nc} assumes a stronger, global mesh regularity than Definition \ref{def:regular}. However, the proof of Theorem 4.3 of \cite{2011-IMAJNA-lavrentiev_nc} can be used verbatim in our setting.}, we have that there exists $\tilde n\in W^{1,1}(U;\R^3)$ and a subsequence (no relabeling) such that
\[
  \begin{split}
n_h&\to \tilde n \quad \text{ in } L^1\\
\nabla n_h&\wto \nabla \tilde n \quad \text{ in } L^1\,.
\end{split}
\]
Taking into account once more \eqref{eq:6} and the fact that $n_h$ is clearly bounded in $L^\infty$, this can be upgraded to $\tilde n\in W^{1,p}(U;\R^3)$ and
  \begin{equation}\label{eq:7}
  \begin{split}
n_h&\to \tilde n \quad \text{ in } L^q \text{ for all } q \in [1,\infty)\\
\nabla n_h&\wto \nabla \tilde n \quad \text{ in } L^p\,.
\end{split}
\end{equation}

By the estimates of Lemma \ref{lma: normal estimate}, and the bounds on $\nabla n_h$, we easily obtain that also
\[
\bar n_h\to \tilde n \quad \text{ in } L^p(U)\,.
\]
Since  $\tilde n$ is the strong $L^p$ limit of the normal maps $\bar n_h$, which lie in the polar cap $\{\nn(p)\,:\,|p|\leq L\}\subset S^2$, $\tilde n$ also lies in the same polar cap, where $\nn$ is a diffeomorphism. Thus
 \[\nabla u_h =\nn^{-1}(\bar n_h) \to \nn^{-1}(\tilde n),
  \]
while at the same time $\nabla u_h\to \nabla u$ in every $L^q(U)$, $q\in[1,\infty)$. Thus $\tilde n = \nn(\nabla u)$, and since $\tilde n\in W^{1,p}(U)$, we have $u\in W^{2,p}(U)$.
\end{proof}

\begin{proof}[Proof of Theorem \ref{thm: main} (ii)] We may assume that the left hand side is finite, otherwise there is nothing to show. By part (i) of Theorem \ref{thm: main}, we have that $n_h\to \nn(\nabla u)$ for all $q\in [1,\infty)$ and $\nabla n_h\wto \nabla (\nn(\nabla u))$ in $L^p$. This is enough to prove the claimed inequality
by  standard weak lower semicontinuity results (see \cite[Theorem 3.23]{MR2361288}).
\end{proof}


We turn to the proof of the upper bound in Theorem \ref{thm: main}, which is basically contained in the following approximation lemma. In its statement, $C^2(\overline U)$ denotes the set of functions in $C^2(U)$ whose partial derivatives up to order two may be extended continuously to the closure of $U$.

\begin{lma}
\label{lma:approx}
  Let $u\in C^2(\overline U)$. For $h$ small enough the following holds true: If $\T_h$ is a regular triangulation of $U$ with $\size(\T_h)=h$, then there exists a piecewise affine continuous function $u_h$ and a normal  field $n_h$ such that $(\T_h,u_h,n_h)\in \snu$ with
    \begin{equation}\label{eq:1}
    \begin{split}
    \|u_h-u\|_{L^p}&\leq C  h^2\\
    \|\nabla u_h-\nabla u\|_{L^p}&\leq C  h\\
\|\nabla n_h- \nabla \nn(\nabla u)\|_{L^\infty}&\leq C  h\,. 
\end{split}
\end{equation}
In the above inequalities, the constant $C$ may depend on $\|\nabla^2 u\|_{L^\infty}$.
  \end{lma}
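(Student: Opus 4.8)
The plan is to construct the approximating data $(\T_h, u_h, n_h)$ explicitly. For $u_h$, the natural choice—indicated in the lemma's own hint for the $C^2$ case—is the piecewise affine Lagrange interpolation of $u$ on the triangulation $\T_h$. Since $\T_h$ is regular in the sense of Definition \ref{def:regular}, standard finite element interpolation theory immediately yields the first two estimates in \eqref{eq:1}: $\|u_h - u\|_{L^p} \lesssim h^2 \|\nabla^2 u\|_{L^\infty}$ and $\|\nabla u_h - \nabla u\|_{L^p} \lesssim h \|\nabla^2 u\|_{L^\infty}$, with constants controlled by $C_*$. The essential difficulty is not $u_h$ but the construction of the edge director $n_h \in N(u_*(\T_h))$ achieving the sharp $L^\infty$ estimate on the gradient.

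First I would fix the target field $\nn(\nabla u)$, which is the smooth surface normal to $\Gr u$ and is Lipschitz because $u \in C^2(\overline U)$. The idea, attributed to \cite{grinspun2006computing}, is to define $n_h$ on each edge $e \in \E(u_*(\T_h))$ by projecting a suitable sample of the smooth normal onto the admissible set. Concretely, for an edge with midpoint $p$ (in the parameter domain $U$), I would take the smooth normal $\nn(\nabla u(p))$ at that midpoint and correct it so as to satisfy the two constraints defining $N$: orthogonality to the edge tangent $\tau(e)$ of the pushed-forward edge, and the sign condition $n_h(e) \cdot \bar n(\kappa) \geq 0$. The orthogonality constraint is enforced by projecting out the $\tau(e)$-component and renormalizing to $S^2$; since the true normal $\nn(\nabla u)$ is already nearly orthogonal to each edge tangent up to an $O(h)$ error (the tangent of a chord differs from the surface tangent by $O(h)$, and the true normal is exactly orthogonal to the surface tangent at $p$), this projection perturbs the value by at most $O(h)$ and the sign condition holds automatically for small $h$.

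The heart of the proof is then the estimate $\|\nabla n_h - \nabla \nn(\nabla u)\|_{L^\infty} \leq Ch$. On a fixed triangle $\kappa$, $n_h$ is affine and determined by its three edge-midpoint values $n_h(e_i)$, so $\nabla n_h|_\kappa$ is a first divided difference of these values. I would compare $\nabla n_h|_\kappa$ to $\nabla(\nn(\nabla u))$ evaluated at, say, the barycenter. The strategy is to write each edge value as $n_h(e_i) = \nn(\nabla u(p_i)) + O(h^2)$, where the $O(h^2)$ quantifies the combined effect of the constraint-projection and renormalization, using that these corrections scale like (deviation)$^2$ near a point where the smooth normal is already admissible to first order. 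Since $\nn(\nabla u)$ is $C^1$ with Lipschitz gradient (here I use $u \in C^2(\overline U)$), the affine interpolant of the \emph{exact} samples $\nn(\nabla u(p_i))$ has piecewise gradient within $O(h)$ of $\nabla(\nn(\nabla u))$ by the interpolation estimate for $C^1$ functions; the $O(h^2)$ perturbations, divided by the $O(h)$ edge lengths inherent in the divided difference, contribute a further $O(h)$, controlled by regularity of $\T_h$. I expect the main obstacle to be bookkeeping this $O(h^2)$ bound on the edge-value perturbations with genuine rigor: one must verify that the projection-plus-renormalization operation really is second-order accurate, which hinges on the fact that the quantity being projected out is itself $O(h)$ and that the renormalization of a vector of length $1 + O(h^2)$ costs only $O(h^2)$. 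Once this quadratic smallness is established, dividing by the $O(h)$ mesh scale still leaves the claimed $O(h)$, and the regularity bound \eqref{eq:9} guarantees the divided-difference constant stays uniform. The final step is to confirm the resulting $n_h$ lies in $N(u_*(\T_h))$ and that convergence of $\nabla n_h$ in $L^\infty$, together with the $u_h$ estimates and continuity of $F$, upgrades to convergence of the energies, which is the conclusion feeding the upper bound in Theorem \ref{thm: main}(iii).
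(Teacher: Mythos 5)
Your construction coincides with the paper's: $u_h$ is the Lagrange interpolant, and your edge director (project the exact normal $\nn(\nabla u(m))$ at the edge midpoint onto the plane orthogonal to the pushed-forward edge and renormalize) is exactly the paper's choice of the closest point to $\nn(\nabla u(m))$ on the great circle $\{w\in S^2: w\cdot(x_2-x_1,u(x_2)-u(x_1))=0\}$. Your reduction of the third estimate in \eqref{eq:1} to the edge-value bound $|n_h(m)-\nn(\nabla u(m))|\lesssim h^2$ is also the paper's reduction. However, your justification of that crucial bound contains a genuine error: the projection-plus-renormalization correction is \emph{linear}, not quadratic, in the component being projected out. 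If $v\in S^2$ and $v\cdot\hat\tau=\eps$, the corrected vector is $(v-\eps\hat\tau)/\sqrt{1-\eps^2}$, which lies at distance $\eps+O(\eps^2)$ from $v$; the renormalization indeed costs only $O(\eps^2)$, but that is the subdominant term, and the dominant cost is the removed component $\eps$ itself. Since you only establish $\eps=|\nn(\nabla u(m))\cdot\hat\tau(e)|=O(h)$ (via the generic estimate that a chord direction differs from the surface tangent by $O(h)$), your argument yields edge values accurate only to $O(h)$; after dividing by the $O(h)$ mesh scale in the divided differences, this gives $\|\nabla n_h-\nabla\nn(\nabla u)\|_{L^\infty}=O(1)$, which is not the claimed $O(h)$ and does not even yield convergence.

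The missing ingredient is a superconvergence effect at the \emph{midpoint}, which is the heart of the paper's proof: one needs
\[
\left|\nn(\nabla u(m))\cdot\bigl(x_2-x_1,\,u(x_2)-u(x_1)\bigr)\right|\lesssim h^3,
\]
so that after dividing by the chord length (which is $\gtrsim \diam\kappa$ by regularity) the deviation of the exact normal from the admissible great circle is $O(h^2)$, not $O(h)$. This holds precisely because the sample point is the midpoint of $[x_1,x_2]$: the paper considers $g(t)=\nn(\nabla u(m))\cdot\bigl(x_2-x_1,(x_2-x_1)\cdot\nabla u(tx_2+(1-t)x_1)\bigr)$, which vanishes at $t=\tfrac12$ by definition of $\nn$, and uses the identity $\int_0^1 g(t)\,\d t=\nn(\nabla u(m))\cdot(x_2-x_1,u(x_2)-u(x_1))$ together with Taylor expansion about $t=\tfrac12$, where the linear term integrates to zero by symmetry. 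Had the normal been sampled at a vertex instead of the midpoint, no admissible director would be $O(h^2)$-close to it, so this cancellation is not bookkeeping but the essential mechanism; your proposal as written does not contain it, and the step where you assert that the correction "scales like (deviation)$^2$" is false.
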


  \begin{proof}
    We define $u_h$ by letting $u_h(x)=u(x)$ for every vertex $x\in \mathscr V(\T_h)$, and by affine interpolation on every $\kappa\in\T_h$. This choice immediately yields the first two  inequalities in \eqref{eq:1}. It remains to construct $n_h\in N((u_h)_*(\T_h))$  satisfying the third estimate.

    \medskip

    For every edge $e=[x_1,x_2]$ in $\T_h$, we define $n_h$ in the edge midpoint $m=\frac{x_1+x_2}{2}$ as the vector in  $S^2$ orthogonal to $(x_2-x_1,u(x_2)-u(x_1))$ closest to $\nn(\nabla u(m))$, i.e. the unique vector  in $\{w\in S^2:w\cdot (x_2-x_1,u(x_2)-u(x_1))=0\}$ that satisfies
    \[
      \begin{split}
\left\|n_h(m)-\nn(\nabla u(m))\right\|
        =\dist\left(\{w\in S^2:w\cdot (x_2-x_1,u(x_2)-u(x_1))=0\},\nn(\nabla u(m)) \right)\,.
      \end{split}
\]
        In particular this choice guarantees that $n_h\in N((u_h)_*(\T_h))$.

\medskip

        On every $\kappa\in\T_h$, $n_h$ is now defined by affine interpolation between the edge midpoints to yield an element of $\CR(\T_h;\R^3)$.

\medskip

To show the convergence $\nabla n_h\to \nabla \nn(\nabla u)$ in $L^\infty$,  it suffices to show that at every  midpoint $m=\frac{x_1+x_2}{2}$ of an edge $e=[x_1,x_2]$, 
  \begin{equation}\label{eq:3}
|n_h(m)-\nn(\nabla u(m))|\lesssim h^2\,.
\end{equation}
Indeed, the last line of \eqref{eq:1} follows from \eqref{eq:3} easily by the fact that $\nn(\nabla u)\in C^2$ and by Taylor's theorem. 

\medskip

By definition of $\nn$, we have that 
\[
\nn(\nabla u(m))\cdot \left(x_2-x_1,(x_2-x_1)\cdot\nabla u(m)\right)=0\,.
\]
Hence the function $t\mapsto \nn(\nabla u(m))\cdot \left(x_2-x_1,(x_2-x_1)\cdot \nabla u(tx_2+(1-t)x_1)\right)$ has a zero in $t=\frac12$ and its  derivative is bounded by $Ch$, where $C$ depends on $\|\nabla^2 u\|_{L^\infty}$. Thus
\[
\left|\int_0^1 \nn(\nabla u(m))\cdot \left(x_2-x_1,(x_2-x_1)\cdot \nabla u(tx_2+(1-t)x_1)\right)\d t\right|\leq C h^3\,.
\]
This yields
\[
\left|\nn(\nabla u(m))\cdot (x_2-x_1,u(x_2)-u(x_1))\right|\leq Ch^3\,.
\]
From this estimate and the construction of $n_h(m)$ as the vector in the grand circle in $S^2$  orthogonal to $(x_2-x_1,u(x_2)-u(x_1))$ with least distance from $\nn(\nabla u(m))$, we obtain \eqref{eq:3}.  \end{proof}

\begin{proof}[Proof of Theorem \ref{thm: main} (iii)]
For $u\in C^2(\overline U)$ the upper bound statement in the theorem is an immediate 
  consequence of Lemma \ref{lma:approx}. For $u\in W^{1,\infty}\cap W^{2,p}(U)$, one uses an approximation by $C^2$-functions and a standard diagonal sequence argument.
\end{proof}

\section{Numerical example}

\label{sec:numerical}

Before presenting the numerical example of the above discretization in Section \ref{sec:numerical-experiment} below, we introduce a slight modification for numerical efficiency in Section \ref{sec:pseudo-unit-edge}.

\subsection{Pseudo-unit edge directors}
\label{sec:pseudo-unit-edge}

  In the statement of the Theorem  \ref{thm: main} 
we may replace the set of unit edge directors  by the set of \emph{pseudo-unit edge directors}, which we define below. 
\begin{defi}
A \emph{pseudo - unit edge director} of a triangular complex $\T$ is a map $n:\E(\T)\to \R^3$ such that for every $e\in\E(\T)$ with adjacent triangles $\kappa,\kappa'$, there exists $\lambda(e)\in\R$ such that
\[
  n(e) := \frac{\bar n(\kappa) +  \bar n(\kappa')}{|\bar n(\kappa) +  \bar n(\kappa')|} + \lambda(e) \big(\bar n(\kappa) - \bar n(\kappa')\big)
\]
The family of pseudo-unit edge directors is denoted $PN(\T)$ and an example is provided in Figure \ref{fig:pseudonormal}.

\begin{figure}[h]
    \centering
    \includegraphics[width=\textwidth]{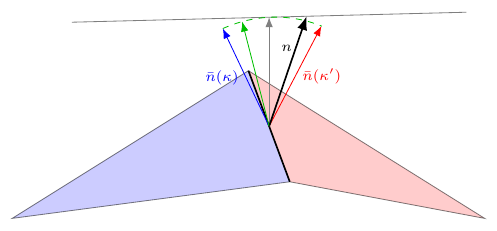}
    \caption{While the triangle normals $\bar n(\kappa)$, $\bar n(\kappa')$ are predetermined 
        by the triangle’s vertices, edge directors have one additional degree of freedom. 
        A possible unit edge director is shown in green, and a pseudo-unit edge director $n$ is shown in black.
    }
    \label{fig:pseudonormal}
\end{figure}

 \end{defi}


The motivation for defining pseudo-unit edge directors as above is that for a given $\T$, the  choice of some $n\in PN(\T)$ amounts to satisfying a set of \emph{linear} constraints. Such a choice is less computationally demanding than the choice of some $n\in N(\T)$.
To obtain the  statement of Theorem \ref{thm: main} for pseudo-edge directors, one only needs to make one notable modification in the proof, namely to replace the  comparison estimate in Lemma  \ref{lma: normal estimate} by Lemma \ref{lma:pseudo} below. We leave the other (rather obvious) modifications in the proof of Theorem \ref{thm: main}  to the reader.

\begin{lma}
\label{lma:pseudo}
Let $\T$ be a regular triangular complex.
There exists a constant $\e_0>0$ that depends only on  $C_*$ such that for
 every $n\in PN(\T)$, $e = \kappa \cap \kappa'\in \E(\T)$, if $(\diam\kappa) (|D n_\kappa| + |D n_{\kappa'}|)<\e_0$, then
  \[|n(e) - \bar n(\kappa)| \lesssim (\diam\kappa) (|D n_\kappa| + |D n_{\kappa'}|)\,.\] 

\end{lma}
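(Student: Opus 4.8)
The plan is to mirror the structure of the proof of Lemma \ref{lma: normal estimate} as closely as possible, adapting it to the weaker normalization $n(e)\cdot n_0(e)=1$ in place of $n(e)\in S^2$. As before, I would first reduce to a normalized configuration: by rescaling and a rigid motion, assume $\diam\kappa\approx 1$, take $\tau(e)=(0,1,0)$, $\tau(e')=(s,t,0)$ with $C^{-1}\le|s|\le C$ and $|t|\le C$, and $\bar n(\kappa)=(0,0,1)$. Under rescaling, the hypothesis $(\diam\kappa)(|D n_\kappa|+|D n_{\kappa'}|)<C$ becomes a smallness condition $|D n_\kappa|+|D n_{\kappa'}|<C$ on the (now order-one) triangles, which is exactly what lets me control the normalization factor $|n(\kappa)+n(\kappa')|$ away from degeneracy.

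The core of the argument is again to bound $|\bar n(\kappa)\times n(e)|$, i.e. the component of $n(e)$ tangent to $\kappa$. Writing $n(e)=(\alpha,0,\beta)$ and $n(e')=(-t\gamma/s,\gamma,\delta)$ as in the original proof, the inequality $|D n_\kappa|\gtrsim|n(e)-n(e')|\gtrsim|\alpha|+|\gamma|$ still holds purely from the interpolation geometry and is independent of the normalization. Thus I still get $|\bar n(\kappa)\times n(e)|=|\alpha|\lesssim|D n_\kappa|$. The new difficulty is that, without $n(e)\in S^2$, I no longer automatically know $\beta\approx 1$, so the step $|n(e)-\bar n(\kappa)|\lesssim|\bar n(\kappa)\times n(e)|$ is not immediate: I must control the \emph{length} of $n(e)$, not just its tilt.

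This is where the main obstacle lies, and where the constraint $n(e)\cdot n_0(e)=1$ together with the smallness hypothesis must be used. The idea is that $n_0(e)$ is a unit vector close to $\bar n(\kappa)=(0,0,1)$: indeed $|n(\kappa)-n(\kappa')|\lesssim(\diam\kappa)(|D n_\kappa|+|D n_{\kappa'}|)$ by applying the interpolation bound on both triangles and invoking Lemma \ref{lma: normal estimate}-type comparisons, so both triangle normals, and hence their normalized average $n_0(e)$, are within $O(|D n_\kappa|+|D n_{\kappa'}|)$ of $(0,0,1)$. The normalization $n(e)\cdot n_0(e)=1$ then forces $\beta=n(e)\cdot(0,0,1)$ to equal $1$ up to an error controlled by $|\alpha|$ (the tangential part of $n(e)$) times the tilt of $n_0(e)$, which is itself $O(|D n_\kappa|+|D n_{\kappa'}|)$. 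The smallness hypothesis guarantees this error is lower order, so I can solve for $\beta=1+O\big((|D n_\kappa|+|D n_{\kappa'}|)\big)$ and conclude $|\beta-1|\lesssim(\diam\kappa)(|D n_\kappa|+|D n_{\kappa'}|)$.

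Combining the tangential estimate $|\alpha|\lesssim|D n_\kappa|$ with the normal estimate $|\beta-1|\lesssim(|D n_\kappa|+|D n_{\kappa'}|)$ and undoing the rescaling yields
\[
|n(e)-\bar n(\kappa)|\le|\alpha|+|\beta-1|\lesssim(\diam\kappa)(|D n_\kappa|+|D n_{\kappa'}|),
\]
which is the claim. The genuinely delicate point is the quantitative control of $|n(\kappa)+n(\kappa')|$ away from zero: if the two triangle normals were nearly antipodal the pseudo-unit normalization would blow up, and it is precisely the smallness hypothesis $(\diam\kappa)(|D n_\kappa|+|D n_{\kappa'}|)<C$ that rules this out by keeping both normals close to $\bar n(\kappa)$. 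I expect everything else to reduce to the same elementary linear-algebra estimates already carried out for unit edge directors.
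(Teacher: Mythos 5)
Your skeleton matches the paper's proof: rescale and rotate, note that the tangential bound $|\alpha|+|\gamma|\lesssim|D n_\kappa|$ (hence the cross-product bound \eqref{eq: cross product}) survives verbatim for pseudo-unit directors, use the normalization together with the smallness hypothesis to control the length of $n(e)$, and finish with the triangle inequality. The final assembly, the role you assign to smallness (ruling out nearly antipodal normals), and the scaling bookkeeping are all correct.

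The gap sits in the one step that carries the real content: your claim that $|\bar n(\kappa)-\bar n(\kappa')|\lesssim |D n_\kappa|+|D n_{\kappa'}|$ (in the rescaled frame) follows ``by applying the interpolation bound on both triangles and invoking Lemma \ref{lma: normal estimate}-type comparisons.'' It does not. Lemma \ref{lma: normal estimate} is only available for \emph{unit} edge directors; its pseudo-unit analogue $|n(e)-\bar n(\kappa)|\lesssim |D n_\kappa|+|D n_{\kappa'}|$ is exactly what is being proved, so invoking it here is circular. And the tools you actually have at that point --- the interpolation bounds and the two cross-product bounds $|n(e)\times\bar n(\kappa)|\lesssim|D n_\kappa|$, $|n(e)\times\bar n(\kappa')|\lesssim|D n_{\kappa'}|$ --- do not imply the claim on their own: the edge director $n\equiv 0$ satisfies every tangency constraint, has $D n=0$ and vanishing cross products on any complex (so the smallness hypothesis holds as well), yet the dihedral angle between $\kappa$ and $\kappa'$ is arbitrary. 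The normalization $n(e)\cdot n_0(e)=1$ must enter already in this step, not only afterwards when you solve for $\beta$. This is precisely how the paper closes it: rotate so that $n_0(e)=(0,0,1)$; the normalization then forces $n(e)=(\alpha,0,1)$, while $\bar n(\kappa)=(\sin\theta,0,\cos\theta)$ and $\bar n(\kappa')=(-\sin\theta,0,\cos\theta)$, so the two cross-product bounds read $|\alpha-\tan\theta|\lesssim |D n_\kappa|/\cos\theta$ and $|\alpha+\tan\theta|\lesssim |D n_{\kappa'}|/\cos\theta$; adding them eliminates $\alpha$ and yields $|\sin\theta|\lesssim |D n_\kappa|+|D n_{\kappa'}|$. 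The structural fact your sketch misses is that $n_0(e)$ is orthogonal to $\bar n(\kappa)-\bar n(\kappa')$, so the unit component of $n(e)$ along $n_0(e)$ makes the difference of the normals fully visible in the two cross products. Once this step is inserted, the rest of your argument ($\cos\theta>1/2$ by smallness, hence $|\alpha|\lesssim|D n_\kappa|+|D n_{\kappa'}|$ and $|\beta-1|\lesssim|D n_\kappa|+|D n_{\kappa'}|$, then undo the rescaling) goes through and coincides with the paper's proof.
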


\begin{proof}
We follow the proof of Lemma \ref{lma: normal estimate} up until and including \eqref{eq: cross product}. (In particular, we apply the same rescaling yielding $\diam\kappa\simeq 1$.) 
Applying \eqref{eq: cross product} also to $\kappa'$ yields $|n(e) \times \bar n(\kappa')| \lesssim |D n_{\kappa'}|$. By applying another rotation, we may change our assumptions on the explicit forms of the vectors at play to  $\frac{\bar n(\kappa)+\bar n(\kappa')}{|\bar n(\kappa)+\bar n(\kappa')|} = (0,0,1)$, $n(e) = (\alpha,0,1)$, $\bar n(\kappa) = (\sin\theta,0,\cos\theta)$, $\bar n(\kappa') = (-\sin\theta,0,\cos\theta)$, with $\alpha\in \R, \theta\in (-\pi/2,\pi/2)$. Inequality \eqref{eq: cross product} applied to  $\kappa$ and $\kappa'$ then becomes
\[
|\alpha \cos\theta- \sin \theta| \lesssim |D n_\kappa|\quad \text{ and }\quad |\alpha\cos\theta + \sin \theta| \lesssim |D n_{\kappa'}|\,.
\]

It follows that $|\alpha\cos\theta|+|\sin\theta| \lesssim |D n_\kappa|+|D n_{\kappa'}|$. If $|D n_\kappa|+|D n_{\kappa'}|$ is small enough (which we may assume by an appropriate choice of $\e_0$), then $|\sin\theta| < 1/2$ such that $|1-\cos\theta|<|\sin\theta|$.  By the triangle inequality we obtain $|n(e) -\bar  n(\kappa)|  \lesssim  |\alpha-\sin\theta|+|1-\cos\theta|\lesssim |\alpha|+|\sin\theta|\lesssim |D n_\kappa|+|D n_{\kappa'}|$.
\end{proof}

\subsection{Numerical experiment}
\label{sec:numerical-experiment}

In this section, we apply the discretized model discussed above to an example problem. 
We consider a domain $\Omega\subset\R^2$ with a $C^2$-smooth boundary constructed with 
B-splines.
The B-spline curve is defined by a set of control nodes, which in our case are chosen to 
prevent any symmetry in $\Omega$, as shown in Figure \ref{fig:domain_and_target}.
The triangular mesh $\mathscr T$ is generated automatically with the Gmsh Python API.
We consider the continuous energy 
\begin{equation}\label{eq:cont_prob}
    E_0(u) = \int_{\Omega} |D(\mathbf{n}(\nabla u))|^4 + \big(u - \sin(2x)\cos(2y)\big)^2\d x\,,
    \end{equation}
which in its discretized form becomes 
  \begin{equation}\label{eq:8}
E(u_*(\mathscr T),n) = \int_{\Omega} |Dn|^4 + \big(u - \sin(2x)\cos(2y)\big)^2\d x\,.
\end{equation}

\begin{figure}[h]
    \centering
    \begin{subfigure}{0.49\textwidth}
        \centering
        \includegraphics[width=\linewidth]{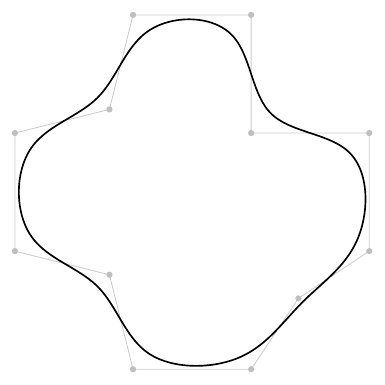}
        \label{fig:domainBsplines}
    \end{subfigure}
    \hfill
    \begin{subfigure}{0.49\textwidth}
        \centering
        \includegraphics[width=\linewidth]{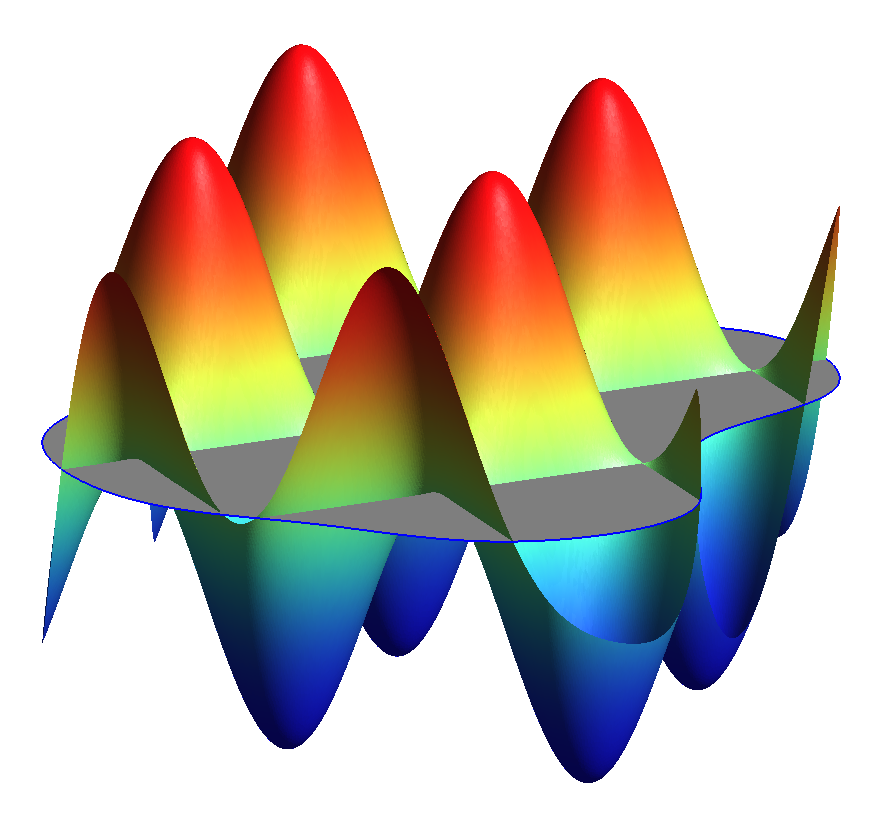}
        \label{fig:target_func}
    \end{subfigure}
    \caption{On the left, the 2D domain constructed from 3rd order B-splines with its associated nodes. On the right, the target function $\sin(2x)\cos(2y)$ evaluated on that domain.}
    \label{fig:domain_and_target}
\end{figure}

We introduce the concatenated variable $q:\mathscr V(\mathscr T)\cup \mathscr{E}(\mathscr{T})\to \R$, 
\[
q(x):=
\begin{cases}
  u(x) & \text{ if }x\in \mathscr V(\mathscr T)\\
\lambda_x & \text{ if } x\in \mathscr E(\mathscr T)
\end{cases}\,
\]
in order to condensate our notation. By slight abuse of notation, we write $E(u_*(\mathscr T),n)=E(q)$. We minimize $E$ with the L-BFGS-B routine of the SciPy Python package. This is a first-order quasi-Newton optimization method that builds an approximation of the inverse Hessian from successive gradient evaluations. As a result, it requires the computation of both the function $E$ and its gradient $\mathrm{d}E/\mathrm{d}q$, which we obtain numerically.

\medskip



\subsection{Numerical results}
The discrete surface optimal to the discrete energy $E$ is shown in Figure \ref{fig:optimal_surfaces} for decreasing mesh sizes. For the finest mesh, the mean and Gaussian curvatures 
are also presented in Figure \ref{fig:curvatures}. 

\medskip
The optimization process requires approximately one minute for the $1\,000$-triangle mesh ($h/L \approx 0.08$) and about one hour for the $15\,000$-triangle mesh ($h/L \approx 0.02$). 
Runtimes were measured on a laptop with an Intel Core i7 processor using a sequential implementation without any parallel or GPU acceleration. As illustrated in figure~\ref{fig:perf}, the computational time scales as $\mathcal{O}(h^{-3})$, as expected for a direct solver. 

\medskip
To observe the impact of the degrees of freedom offered by the edge normals, we ran the same 
simulations but fixing $\lambda(e)=0$ for all edge $e$. We will call this the \textit{reduced}
set of variables where $\tilde q = [\tilde u, 0]$. Of course, the optimum value is higher in 
this case as shown in Figure \ref{fig:perf}. There, we also verify that the discrete 
surfaces defined by the graph of $u$ and $\tilde u$ converge to different surfaces.

\medskip
In order to analyze the evolution of the $\lambda$'s, we shall first define two angles:
\begin{itemize}
    \item $\alpha_0$, the angle between the surface normal $\bar n(\kappa)$, and the mean edge normal. This is  half the \textit{dihedral angle}.
    \item $\alpha$, the angle between the pseudo-unit edge director $n(e)$ and the mean edge normal $(n_\kappa+n_{\kappa'})/|n_\kappa+n_{\kappa'}|$.
    
\end{itemize}

Figure \ref{fig:lambdas} indicates the behavior of the dihedral angle and the difference between mean edge normal and pseudo-unit edge director for different mesh sizes. As the mesh size tends to 0, the ratio $\alpha/\alpha_0$ decreases; this indicates the smoothness of the solution.  We also observe that in some cases, $\alpha > \alpha_0$. This is related to neighboring triangles whose sizes differ significantly.

\begin{figure}
    \centering
    \includegraphics[width=\linewidth]{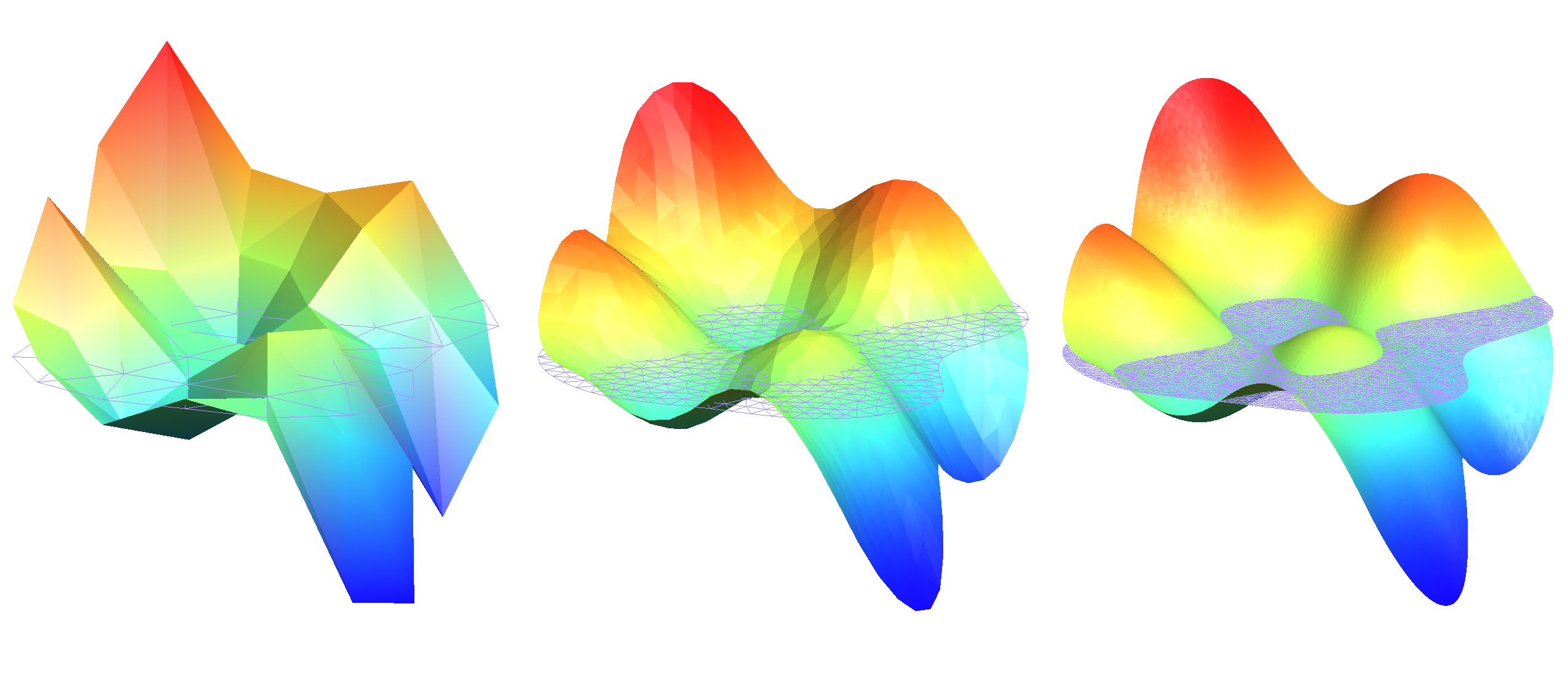}
    \caption{Surfaces minimizing the discrete energy for increasing refinement. The meshes are respectively made of $50$, $1\, 000$, and $15\, 000$ triangles.}
    \label{fig:optimal_surfaces}
\end{figure}

\begin{figure}
    \centering
    \includegraphics[width=0.75\linewidth]{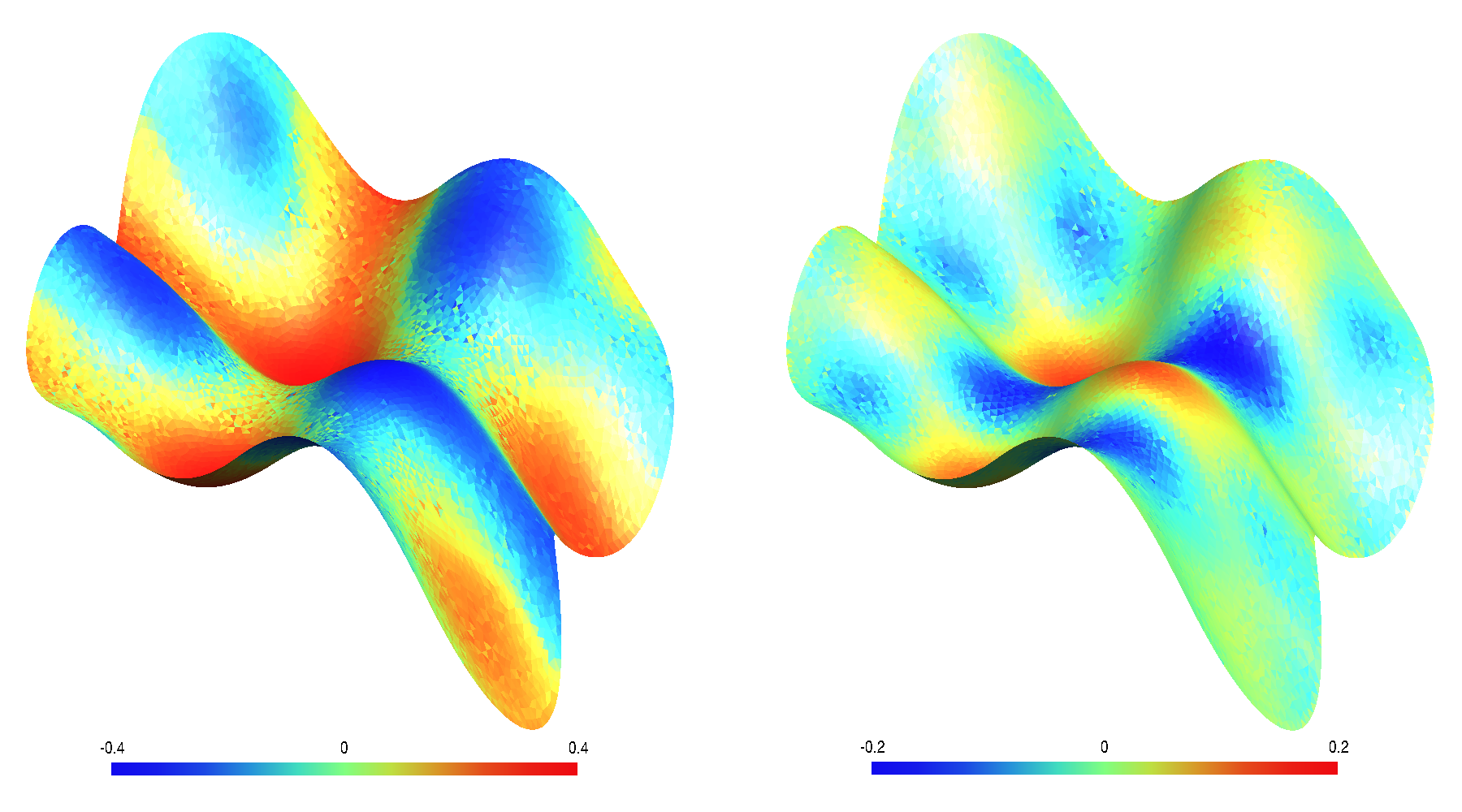}
    \caption{Mean (left) and Gaussian (right) curvatures of the optimal surface on the $15\,000$ triangles mesh.}
    \label{fig:curvatures}
\end{figure}

\begin{figure}
    \centering
    \includegraphics[width=\linewidth]{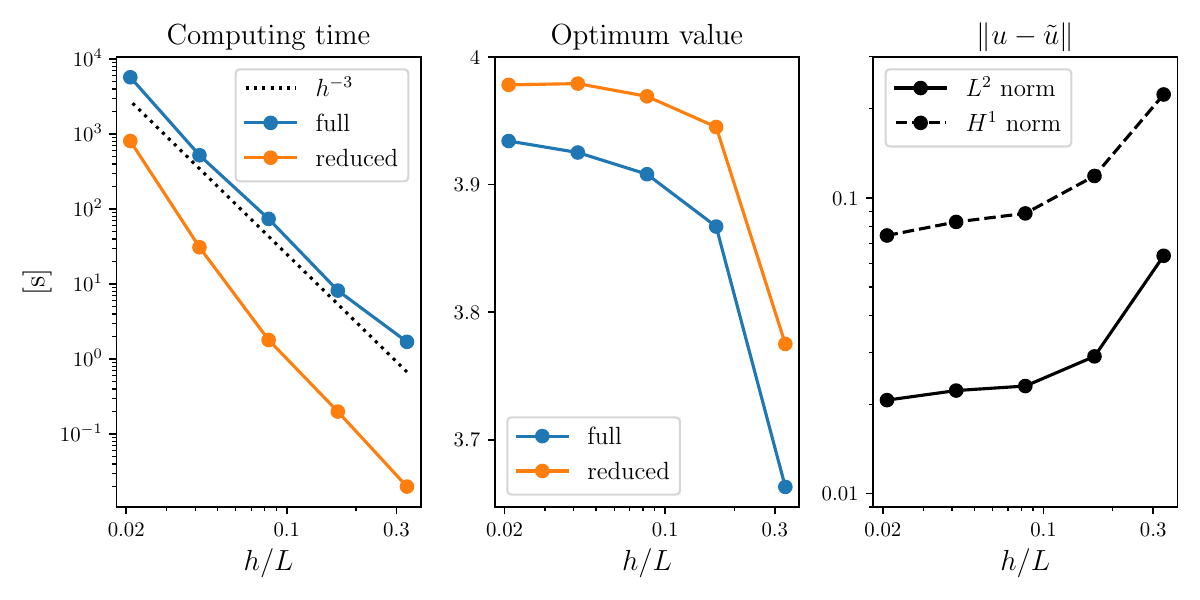}
    \caption{Computational cost and accuracy versus mesh size $h$: time scales as $\mathcal{O}(h^3)$, the objective value stabilizes with mesh refinement while the full reduced solutions $u$ and $\tilde u$ converge towards distinct optima.}
    \label{fig:perf}
\end{figure}

\begin{figure}
    \centering
    \includegraphics[width=\linewidth]{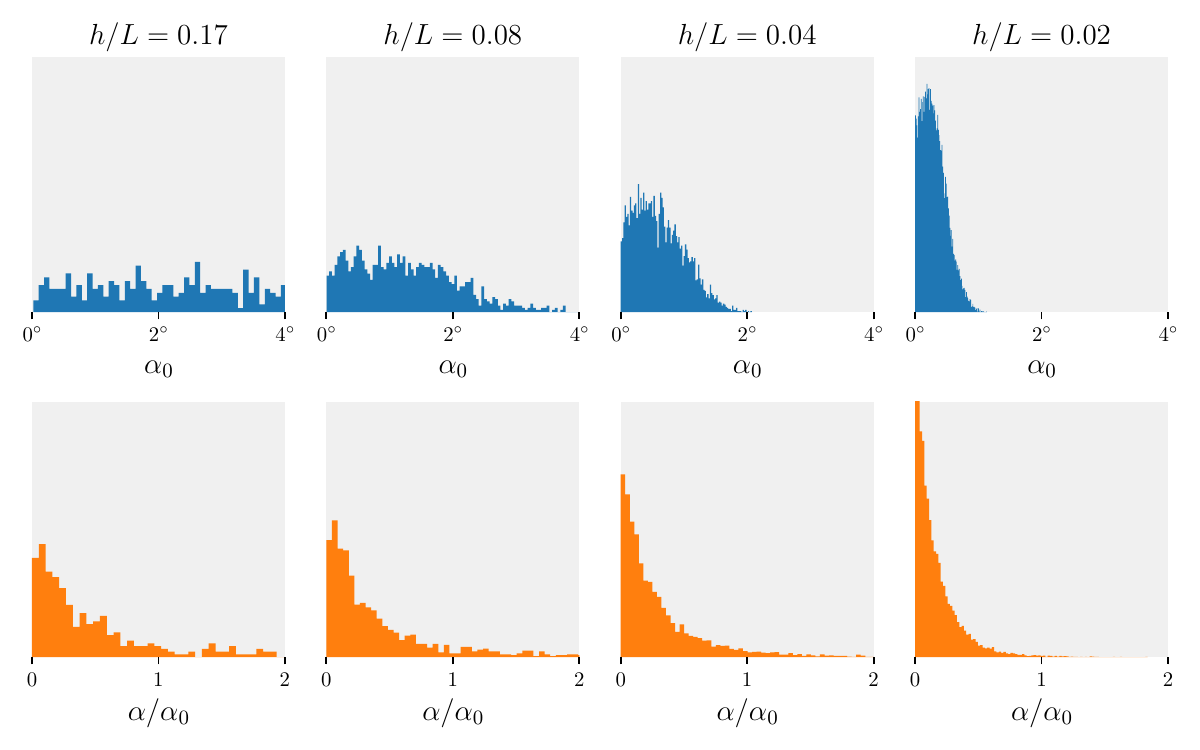}
    \caption{As the mesh is refined, the surface gets smoother. This is indicated by the decrease of the angles $\alpha_0$. The angles $\alpha$ decrease even faster as shown in the second row.}
    \label{fig:lambdas}
\end{figure}

The convergence behavior of the optimizer is displayed in Figure \ref{fig:convergence}. Each row shows the relative error between the current solution vector $q^{(k)}$ and the last solution vector $q^{(N)}$:
\begin{itemize}
    \item for the objective value $E$, the error is $|E_k-E_N|\,/\,|E_N|$,
    \item for the surface elevation $u$, the error is $\|u_k-u_N\|_{L^2} \,/\, \|u_N\|_{L^2}$,
    \item for the edge parameter $\lambda$, the error is $\|\lambda_k-\lambda_N\|_2 \,/\, \|\lambda_N\|_2$.
\end{itemize}
The left column (``full model'') shows the behavior of the algorithm treating pseudo-unit edge directors as free variables; for comparison, we show in the right column the result obtained when eliminating this freedom and setting the edge normal equal to the mean of adjacent surface normals (``reduced model''). 
We observe that the error of the field $u$ quickly drops and then slowly decreases, 
while it is the opposite for the edge parameters $\lambda$. Convergence for the full model is slower and more erratic than for the reduced one; but we recall that the reduced model does not approximate the solution of the continuous variational problem \eqref{eq:cont_prob}.

\begin{figure}
    \centering
    \includegraphics[width=0.8\linewidth]{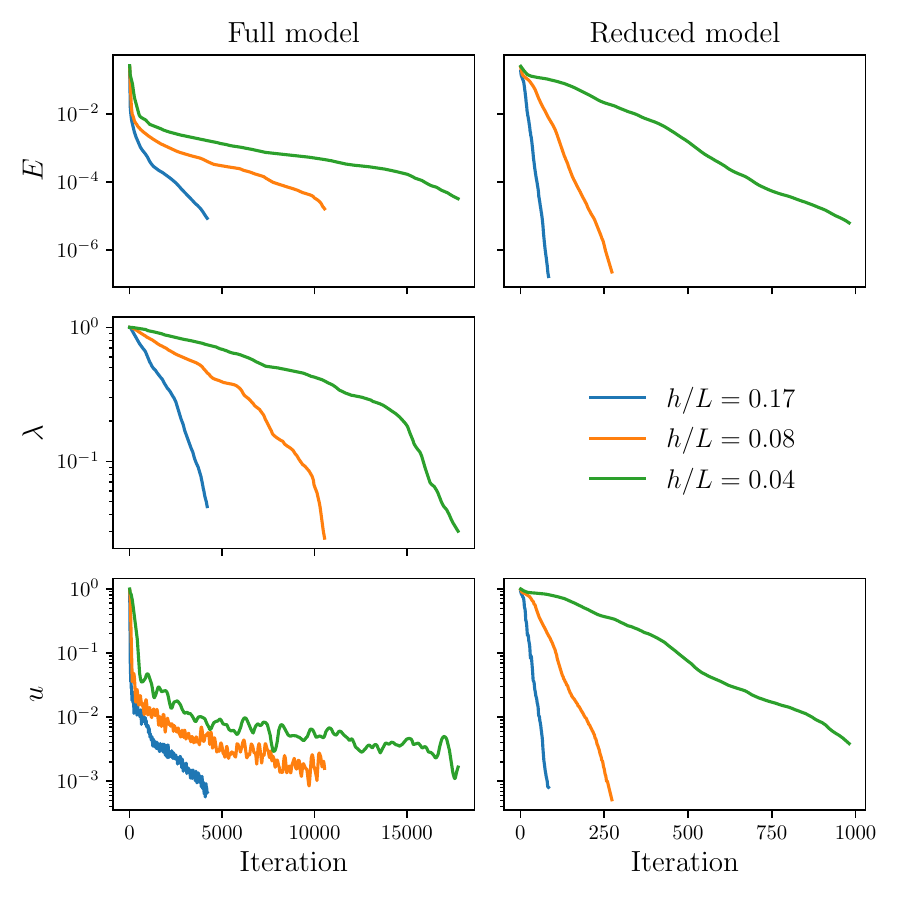}
    \caption{
        The convergence does not happen in sync for different indicators. The optimizer seems 
        to first try to converge the nodal values of $u$, before the edge parameters $\lambda$.
    }
    \label{fig:convergence}
\end{figure}






\bibliographystyle{alpha}
\bibliography{qctri}
\end{document}